 \newtheorem{theorem}{Theorem}[section]
 \newtheorem{proposition}[theorem]{Proposition}
\newtheorem{example}[theorem]{Example}
\newtheorem{remark}[theorem]{Remark}
\newcommand\PG{\mathsf{PG}}
\newcommand\F{\mathbb{F}}
\renewcommand\le{\leqslant}
\title[ternary two-weight codes]{An enumeration of certain projective ternary two-weight codes and their relationship to the cubic Segre variety} 
\author{Michael Martis}
\address{ %
Centre for the Mathematics of Symmetry and Computation\\
School of Mathematics and Statistics\\
The University of Western Australia\\
35 Stirling Highway, Crawley, W.A. 6009, Australia.}
\email{20930496@student.uwa.edu.au}
\author{John Bamberg}
\address{ %
Centre for the Mathematics of Symmetry and Computation\\
School of Mathematics and Statistics\\
The University of Western Australia\\
35 Stirling Highway, Crawley, W.A. 6009, Australia.}
\email{John.Bamberg@uwa.edu.au}
\author{Sylvia Morris}
\address{ %
Centre for the Mathematics of Symmetry and Computation\\
School of Mathematics and Statistics\\
The University of Western Australia\\
35 Stirling Highway, Crawley, W.A. 6009, Australia.}
\email{aivlysann@gmail.com}
\dedicatory{We have dedicated this work to the memory of Axel Kohnert (1962 -- 2013).}
\keywords{two-intersection set, strongly regular graph, two-weight code, Segre variety}
\subjclass[2010]{Primary 05B25, 51E12, 05C12, 05C38}
\begin{document}

\begin{abstract}
We detail the enumeration of all two-intersection sets of the five-dimensional projective space over the field of order $3$ that
are invariant under an element of order $7$, which include the examples of Hill (1973) and Gulliver (1996). Up to projective equivalence, there
are 6635 such two-intersection sets. 
\end{abstract}

\maketitle

\section{Introduction}

In the early 1970's, Philippe Delsarte showed that there is a remarkable connection between three
objects in mathematics:
\begin{enumerate}
\item[(i)] strongly regular graphs;
\item[(ii)] linear codes with two weights;
\item[(iii)] sets of points in a projective space with two intersection sizes with respect to hyperplanes.
\end{enumerate}

To illustrate these connections, consider the five-dimensional projective space over the field of order $3$. There are some
captivating examples of two-intersection sets in this space, one of which was discovered by Ray Hill \cite{Hill73} in 1973 and
it is known as \emph{Hill's $56$-cap} since it consists of $56$ points (of $\PG(5,3)$), no three
lying on a common line. This cap has the property that any hyperplane of $\PG(5,3)$ has only two
possible ways of intersecting the cap: in $11$ or $20$ points. Equivalently, we can associate the points of this cap with the columns of
a generator matrix for a ternary linear code with parameters $[56,6]$ and weights $36$ and $45$.
Using a beautiful geometric construction known as \emph{linear representation}, Hill's $56$-cap gives rise to a geometry known
as a \emph{partial quadrangle}, and it therefore gives rise to a strongly regular graph on $729$
vertices. Today, we also know that Hill's $56$-cap is intimately linked to \empty{Segre's
  hemisystem} of the Hermitian variety $\mathsf{H}(3,3^2)$ and to an interesting imprimitive
cometric $Q$-antipodal $4$-class association scheme \cite{VanDamEtAl}. There are other examples in
this space found by Gulliver \cite{Gulliver1}, and these examples have something in
common; they each admit a common symmetry of order $7$, which happens to be $q^2-q+1$ when
$q=3$. In general, when $q\not\equiv 2 \pmod{3}$, we can \emph{factor out} this symmetry and
identify these two-intersection sets with subsets of the classical algebraic variety known as the
\emph{cubic Segre variety} (see Section \ref{segre}).

This paper details the search for, and enumeration of, a family of strongly regular graphs arising
from two-intersection sets of $\PG(5, 3)$. Of the three mathematical objects above, two-intersection
sets are the most readily computable, and hence effort was focused on finding two-intersection sets
using linear programming techniques. In particular, the projective space $\PG(5, 3)$ was examined as
a successor to $\PG(5,2)$, since the binary projective codes of dimension at most $6$ have been
classified by Bouyukliev \cite{Bouyukliev}. Over 6000 new strongly regular graphs were discovered as
a result of enumerating two-weight ternary codes of dimension $6$, many of which overlap the list of
codes on Kohnert's webpage\footnote{\texttt{http://linearcodes.uni-bayreuth.de/twoweight/}}.
 Up to projective equivalence, there are 6635 such two-intersection sets of $\PG(5,3)$ invariant under a collineation of order $7$.

\section{Correspondences between two-weight linear codes, two-intersection sets and strongly regular graphs}

We begin with the elementary definitions of all three objects, and the relationships between them.
An \emph{$[n, k]$ linear code} is a $k$-dimensional subspace of $\F_q^n$, and its vectors
are known as \emph{codewords}. The weight of a codeword is the number of non-zero values in its
coordinate vector. Every $[n, k]$ linear code is the rowspace of some $k \times n$ generator matrix.
A linear code $C$ is \emph{projective} if no two columns of its generator matrix are linearly
dependent. Equivalently, $C$ is projective if the minimum weight of the dual code $C^\perp$ of $C$
is at least $3$. A two-weight linear code is simply a linear code in which all non-zero codewords
have weight one of two fixed values $w_{1}$ or $w_{2}$. The family of two-weight linear codes is an
important one; a $2$-error-correcting linear code is perfect if and only if its dual is a two-weight
code, and a $1$-error-correcting linear code is uniformly packed if and only if its dual is a
two-weight code \cite[Theorem 4.2, Theorem 4.3]{CalderbankSurvey}.

A two-intersection set
$\mathcal{K}$ with parameters $(n, k, h_{1}, h_{2})$ is a set of points in $\PG(k - 1, q)$ such that
every hyperplane in $\PG(k - 1, q)$ is incident with either $h_{1}$ or $h_{2}$ points in
$\mathcal{K}$. For example, the points lying on a line form a simple two-intersection set with
parameters $(q + 1, k, 1, q + 1)$: every hyperplane either lies completely on the line (and hence
intersects it at $q + 1$ points), or meets it at one point. The complement of a two-intersection set
with parameters $(n, k, h_{1}, h_{2})$ is again a two-intersection set (but with different
parameters).

Every projective two-weight linear code arises from a two-intersection set and gives rise to a
strongly regular graph. A strongly regular graph with parameters $(n, k, \lambda, \mu)$ is a graph
containing $n$ vertices in which every vertex has degree $k$, every two adjacent vertices share
$\lambda$ common neighbours, and every two non-adjacent vertices share $\mu$ common neighbours. The
following results were originally proved by Delsarte \cite{DelsarteSRG}, and provide the necessary
details for converting a two-intersection set into a two-weight linear code and for converting a
two-weight linear code into a two-intersection set or a strongly regular graph.

\begin{theorem}[c.f., Calderbank and Kantor {\cite[Theorem 3.2]{CalderbankSurvey}}]
If $\mathcal{K} = \{ p_{1}, p_{2}, \ldots, p_{n} \}$ is a two-intersection set with parameters $(n,
k, h_{1}, h_{2})$ that spans $\PG(k - 1 , q)$, then
\[
 \mathcal{G} =\begin{bmatrix} p_{1}^{\perp} & p_{2}^{\perp} & \cdots & p_{n}^{\perp}
 \end{bmatrix}\]
  is the generator matrix of a projective two-weight $[n, k]$ linear code with weights $n - h_{1}$
  and $n - h_{2}$, where it is understood that the $p_i$ are vectors representing the homogeneous
  coordinates of points of $\PG(k-1,q)$. Conversely, if $ \mathcal{G} = \begin{bmatrix} c_{1}\,
    c_{2} \cdots c_{n} \end{bmatrix} $ is the generator matrix of a projective two-weight $[n, k]$
  linear code with weights $w_{1}$ and $w_{2}$, then $\mathcal{K} = \{c_{1}^{\perp}, c_{2}^{\perp},
  \ldots, c_{n}^{\perp}\}$ are the homogeneous coordinate vectors for a two-intersection set with
  parameters $(n, k, n - w_{1}, n - w_{2})$ that spans $\PG(k - 1 , q)$.
\end{theorem}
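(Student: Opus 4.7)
My plan is to translate the combinatorial condition on hyperplane intersections into an algebraic condition on weights of codewords, via the standard point-hyperplane duality of $\PG(k-1,q)$. I would parameterise hyperplanes by non-zero vectors $v \in \F_q^k$ (up to scalar), writing $H_v$ for the hyperplane consisting of those points whose coordinate vectors $u$ satisfy $v \cdot u = 0$.

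The central observation is that every codeword of the code $C$ generated by $\mathcal{G}$ has the form $v\mathcal{G}$, and its $i$-th coordinate is $v \cdot p_i$, which vanishes exactly when $p_i$ lies on $H_v$. Hence for non-zero $v$,
\[ \mathrm{wt}(v\mathcal{G}) = n - |\mathcal{K}\cap H_v|, \]
and the two-intersection hypothesis on $\mathcal{K}$ forces this weight to take one of the two values $n-h_1$ or $n-h_2$, giving the two-weight property.

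The remaining code-theoretic conditions follow from properties of $\mathcal{K}$: the code has dimension $k$ because $\mathcal{K}$ spans $\PG(k-1,q)$, which forces $\mathcal{G}$ to have rank $k$; and the code is projective because the columns of $\mathcal{G}$ represent distinct projective points, so no two are linearly dependent. The converse direction runs the same dot-product calculation in reverse: the columns of $\mathcal{G}$ form $n$ distinct projective points which span $\PG(k-1,q)$, and each hyperplane $H_v$ satisfies $|\mathcal{K}\cap H_v| = n - \mathrm{wt}(v\mathcal{G}) \in \{n-w_1, n-w_2\}$.

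There is no serious technical obstacle here; the proof is essentially a one-line dot-product computation once the duality is set up. The only step requiring genuine care is the bookkeeping around the correspondence between vectors and hyperplanes: the vector $v=0$ yields the zero codeword (which is excluded from the weight count), while distinct non-zero scalar multiples of $v$ represent the same hyperplane and yield codewords of the same weight but distinct as vectors in $\F_q^n$. Making sure these degeneracies interact correctly with the dimension and projectivity claims is the main place where one could slip, but it is straightforward once articulated.
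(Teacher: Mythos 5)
Your argument is correct, and it is the standard point--hyperplane duality computation: $\mathrm{wt}(v\mathcal{G}) = n - |\mathcal{K}\cap H_v|$ for $v\neq 0$, with spanning giving rank $k$ and distinctness of the projective points giving projectivity. Note that the paper itself supplies no proof of this theorem --- it is quoted from Delsarte via Calderbank and Kantor --- so there is nothing to diverge from; your proof is essentially the one in the cited source, and your closing remarks about the $v=0$ degeneracy and scalar multiples of $v$ cover the only places where care is needed (the notation $p_i^{\perp}$ in the statement is just the duality you set up explicitly).
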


Delsarte defined the associated graph $\Gamma(\mathcal{C})$ of a projective two-weight linear code
$\mathcal{C}$ with weights $w_{1}$ and $w_{2}$ as follows: let the vertices of $\Gamma(\mathcal{C})$
correspond to the codewords of $\mathcal{C}$. Two distinct vertices in $\Gamma(\mathcal{C})$ are
adjacent if and only if the weight of the difference between their corresponding codewords is
$w_{1}$.

\begin{theorem}[Delsarte {\cite[Theorem 2]{DelsarteSRG}}]
 $\Gamma(\mathcal{C})$ is a strongly regular graph for any projective two-weight linear code $\mathcal{C}$. 
 \end{theorem}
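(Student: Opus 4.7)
The plan is to realise $\Gamma(\mathcal{C})$ as a Cayley graph on the additive group of $\mathcal{C}$ and then to diagonalise its adjacency matrix using additive characters. Fixing a generator matrix $G$ of $\mathcal{C}$ and identifying $\mathcal{C}$ with $\F_q^k$ via $x \mapsto xG$, the graph $\Gamma(\mathcal{C})$ becomes the Cayley graph on $(\F_q^k, +)$ with connection set
\[
 S = \{x \in \F_q^k \setminus \{0\} : \mathrm{wt}(xG) = w_1\}.
\]
Since Hamming weight is preserved under scalar multiplication, $S$ is symmetric, so this is a well-defined simple graph; in particular, $\Gamma(\mathcal{C})$ is vertex-transitive and regular of degree $|S|$.

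Fixing a nontrivial additive character $\psi \colon \F_q \to \mathbb{C}^{\times}$, the eigenvalues of the adjacency matrix are the character sums
\[
 \lambda_y = \sum_{x \in S} \psi(y \cdot x), \qquad y \in \F_q^k,
\]
with $\lambda_0 = |S|$. The key step is to prove that as $y$ ranges over $\F_q^k \setminus \{0\}$, the value $\lambda_y$ takes at most two possibilities. I would first rewrite the indicator $[\mathrm{wt}(xG) = w_1]$ as the linear expression $(w_2 - \mathrm{wt}(xG))/(w_2 - w_1)$, valid for all nonzero $x$ by the two-weight hypothesis, reducing $\lambda_y$ to an affine combination of $\sum_{x \neq 0} \psi(y \cdot x) = -1$ and the sum $\sum_{x} \mathrm{wt}(xG)\, \psi(y \cdot x)$. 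Expanding the weight as $\mathrm{wt}(xG) = \sum_i [x \cdot G_i \neq 0]$, interchanging summations, and applying orthogonality to the resulting character sums over the hyperplanes $G_i^\perp$ of $\F_q^k$, one finds that each contribution vanishes unless $y$ is a scalar multiple of some column $G_i$. Projectivity of $\mathcal{C}$ ensures that the columns of $G$ are pairwise projectively distinct, so for each $y \neq 0$ at most one index contributes, and $\lambda_y$ then takes one of exactly two values, according to whether $[y]$ lies in the two-intersection set $\mathcal{K} = \{[G_1], \ldots, [G_n]\}$ or not.

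With the adjacency matrix $A$ shown to have at most three distinct eigenvalues, strong regularity follows from the standard spectral criterion: the product $(A - rI)(A - sI)$, where $r, s$ are the two non-degree eigenvalues, is a scalar multiple of the all-ones matrix $J$, and this identity unpacks entry-wise into the defining $(\lambda, \mu)$ counting relations of a strongly regular graph. I expect the principal obstacle to lie in the character-sum computation, where one must carefully distinguish between the role of $y \in \F_q^k$ as a hyperplane normal (governing whether $y \in S$, via the two-intersection property applied to $y^\perp \cap \mathcal{K}$) and as a point of $\PG(k-1, q)$ (governing the value of $\lambda_y$, via incidence with $\mathcal{K}$); resolving this duality cleanly is exactly where the two-intersection hypothesis on $\mathcal{K}$ pays off.
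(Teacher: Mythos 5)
The paper does not prove this theorem at all: it is quoted directly from Delsarte with a citation, so there is no in-paper argument to compare against. Your proposal is a correct and complete proof strategy, and it is the standard modern one (essentially the argument in Calderbank--Kantor and in spirit Delsarte's original): realise $\Gamma(\mathcal{C})$ as a Cayley graph on $(\F_q^k,+)$, diagonalise with additive characters, use the affine rewriting $[\mathrm{wt}(xG)=w_1]=(w_2-\mathrm{wt}(xG))/(w_2-w_1)$ together with $\mathrm{wt}(xG)=\sum_i [x\cdot G_i\neq 0]$ and orthogonality over the hyperplanes $G_i^{\perp}$, and invoke projectivity so that each nonzero $y$ is proportional to at most one column. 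This correctly yields at most two non-principal eigenvalues $\bigl(-w_2+q^{k-1}\epsilon_y\bigr)/(w_2-w_1)$ with $\epsilon_y\in\{0,1\}$, and the identity $(A-rI)(A-sI)=cJ$ then unpacks into the $(\lambda,\mu)$ conditions. The only point you leave tacit is that both eigenvalue classes are actually nonempty (equivalently $0<n<(q^k-1)/(q-1)$, which holds for a genuine two-weight projective code, since $\mathcal{K}$ equal to the whole space would force a single weight); this is needed to rule out the degenerate complete/empty cases and is worth one sentence, but it is not a real gap.
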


%%%%%%%%%%%%%%%%%%%%%%%%%%%%
%
%  Segre
%
%%%%%%%%%%%%%%%%%%%%%%%%%%%%%

\section{The Segre embedding in $\PG(5, q)$}\label{segre}

Our motivation stemmed from the fact that many of the most interesting two-intersection sets of
$\PG(5, 3)$ had a stabiliser divisible by $7$, and $7$ is a primitive prime divisor of $3^6-1$ (and
so an element of order $7$ must act irreducibly). The Hill $56$-cap and the examples found by
Gulliver could be constructed by taking unions of orbits of this cyclic subgroup. Restricting
the search to two-intersection sets that could be constructed in this manner prevented the scope of
the problem from becoming infeasible and allowed for a full enumeration of possible solutions. To
classify all two-weight ternary codes of dimension $6$ seems to be out of the current range of
computational power.

Let us model $\PG(5,q)$ simply as the field extension $\F_{q^6}^\times$ over
$\F_q^\times$. If $\omega$ is a primitive root of $\F_{q^6}^\times$, then
multiplication by $\omega$ yields a cyclic subgroup acting regularly on the points of $\PG(5,q)$;
the so-called \emph{Singer cycle} of $\PG(5,q)$. If we raise $\omega$ to a suitable power, namely
$(q+1)(q^2+q+1)$, then we will obtain a field element of order $(q-1)(q^2-q+1)$ in
$\F_{q^6}^\times$, which will induce a collineation $\tau$ of $\PG(5,q)$ of order $q^2-q+1$.
Alternatively, we can write the orbits of $\tau$ down by taking the equivalence classes of the
following relation:
\begin{equation} x \sim y \Leftrightarrow x^{(q^2 - q + 1)(q - 1)} = y^{(q^2 - q + 1)(q - 1)}, \label{eqn:equiv_rel} \end{equation}
where $x, y \in \F_{q^6}^\times$.

\begin{remark}
A two-intersection set invariant under $\tau$ will automatically span $\PG(5,q)$ since $\tau$ acts irreducibly
on this space. 
\end{remark}

The map $\sigma : \PG(1, q) \times \PG(2, q) \to \PG(5, q)$ defined by
 \[
 \sigma([X_{1} , X_{2}], [Y_{1} , Y_{2} , Y_{3}]) = [X_{1} Y_{1} , X_{1} Y_{2} , X_{1} Y_{3} , X_{2} Y_{1} , X_{2} Y_{2} , X_{2} Y_{3}]
 \]
 is a \emph{Segre embedding}, and the image of this map
is a cubic \emph{Segre variety} $\mathcal{S}_{1,2}$ (see \cite[Chapter 25]{HirschfeldThas}). In order to work with this Segre embedding, it
became necessary to find an equivalent operation over elements of $\F_{q^6}$.

\begin{proposition} 
The map $(x, y) \mapsto xy$ from $\F_{q^2}^\times \times \F_{q^3}^\times$ to $\F_{q^6}^\times$ is equivalent to the Segre embedding $\sigma$.
\end{proposition}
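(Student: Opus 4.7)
The plan is to exploit that $\F_{q^2}$ and $\F_{q^3}$ both sit inside $\F_{q^6}$, and that, since $\gcd(2,3)=1$, their intersection is $\F_q$ and their compositum is all of $\F_{q^6}$. This will let me identify $\F_{q^6}$ with the tensor product $\F_{q^2}\otimes_{\F_q}\F_{q^3}$, and then the field multiplication $(x,y)\mapsto xy$ manifestly looks like the Segre map in appropriate coordinates.

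Concretely, first I would fix an $\F_q$-basis $\{e_1,e_2\}$ of $\F_{q^2}$ and an $\F_q$-basis $\{f_1,f_2,f_3\}$ of $\F_{q^3}$. Using these, identify $\PG(1,q)$ with the quotient of $\F_{q^2}^\times$ by $\F_q^\times$ via $X_1e_1+X_2e_2\mapsto[X_1,X_2]$, and $\PG(2,q)$ with the quotient of $\F_{q^3}^\times$ by $\F_q^\times$ via $Y_1f_1+Y_2f_2+Y_3f_3\mapsto[Y_1,Y_2,Y_3]$. The key lemma to prove (or to extract from standard field theory) is that the six products $e_if_j$ for $i\in\{1,2\}$, $j\in\{1,2,3\}$ are linearly independent over $\F_q$, and therefore form an $\F_q$-basis of $\F_{q^6}$. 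Once this basis is in hand, I use it to identify $\PG(5,q)$ with the quotient of $\F_{q^6}^\times$ by $\F_q^\times$ under the ordering $(e_1f_1,e_1f_2,e_1f_3,e_2f_1,e_2f_2,e_2f_3)$.

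With these three identifications fixed, the computation is a one-liner: for $x=X_1e_1+X_2e_2\in\F_{q^2}^\times$ and $y=Y_1f_1+Y_2f_2+Y_3f_3\in\F_{q^3}^\times$, bilinearity gives
\[
xy=\sum_{i=1}^{2}\sum_{j=1}^{3}X_iY_j\,e_if_j,
\]
so the coordinate vector of $xy$ in the chosen basis of $\F_{q^6}$ is exactly $(X_1Y_1,X_1Y_2,X_1Y_3,X_2Y_1,X_2Y_2,X_2Y_3)$, matching $\sigma$ on the nose. Descending to projective spaces is automatic because scaling $x$ (resp.\ $y$) by $\alpha\in\F_q^\times$ scales $xy$ by $\alpha$ as well, so the induced map on $\PG(1,q)\times\PG(2,q)$ is well defined and coincides with $\sigma$.

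The main obstacle, and really the only nontrivial point, is the linear independence of the six products $e_if_j$ over $\F_q$. I would establish this by the linear disjointness of $\F_{q^2}$ and $\F_{q^3}$ over $\F_q$: a nontrivial $\F_q$-linear relation $\sum_{i,j}a_{ij}e_if_j=0$ can be rewritten as $\sum_j(\sum_i a_{ij}e_i)f_j=0$, and the coefficients $\sum_i a_{ij}e_i$ lie in $\F_{q^2}$; since $[\F_{q^6}:\F_{q^2}]=3$ and $\{f_1,f_2,f_3\}$ are $\F_q$-independent of degrees coprime to $2$, a short degree argument (or a direct appeal to the fact that $\F_{q^2}\cap\F_{q^3}=\F_q$ inside $\F_{q^6}$, forcing $\{f_1,f_2,f_3\}$ to remain $\F_{q^2}$-linearly independent) shows all $a_{ij}=0$. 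With that in place the rest is bookkeeping.
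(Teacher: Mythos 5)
Your proposal is correct and follows essentially the same route as the paper: fix $\F_q$-bases of $\F_{q^2}$ and $\F_{q^3}$, expand $xy$ bilinearly, and reduce everything to the key lemma that the six pairwise products of basis elements form an $\F_q$-basis of $\F_{q^6}$. The only (immaterial) difference is in how that lemma is verified -- you group the relation by the $\F_{q^3}$-basis and finish with a dimension count over $\F_{q^2}$, whereas the paper groups by the $\F_{q^2}$-basis and derives a contradiction from $\F_{q^2}\cap\F_{q^3}=\F_q$; both are instances of the linear disjointness of $\F_{q^2}$ and $\F_{q^3}$ over $\F_q$.
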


\begin{proof}
We model $\PG(1, q)$ as $\F_{q^2}$ over $\F_q$ with basis $\{ \chi_{1}, \chi_{2}
\}$, and $\PG(2, q)$ as $\F_{q^3}$ over $\F_q$ with basis $\{ \psi_{1}, \psi_{2},
\psi_{3} \}$.  Now suppose we have $x = [X_{1} , X_{2}] \in \PG(1, q)$ and $y = [Y_{1} , Y_{2} ,
  Y_{3}] \in \PG(2, q)$; that is, $x = X_{1} \chi_{1} + X_{2} \chi_{2}$ and $y = Y_{1} \psi_{1} +
Y_{2} \psi_{2} + Y_{3} \psi_{3}$. \\ Clearly,
\[xy = X_{1} Y_{1} \chi_{1} \psi_{1} +  X_{1} Y_{2} \chi_{1} \psi_{2} + X_{1} Y_{3} \chi_{1} \psi_{3} + X_{2} Y_{1} \chi_{2} \psi_{1} + X_{2} Y_{2} \chi_{2} \psi_{2} + X_{2} Y_{3} \chi_{2} \psi_{3},\] so it suffices to show that $\{\chi_{1} \psi_{1}, \chi_{1} \psi_{2}, \chi_{1} \psi_{3} , \chi_{2} \psi_{1}, \chi_{2} \psi_{2}, \chi_{2} \psi_{3}\}$ is a basis for $\F_{q^6}$ (over $\F_q$). 

To this end, suppose $ \lambda_{1} \chi_{1} \psi_{1} + \lambda_{2} \chi_{1} \psi_{2} + \lambda_{3}
\chi_{1} \psi_{3} + \lambda_{4} \chi_{2} \psi_{1} + \lambda_{5} \chi_{2} \psi_{2} + \lambda_{6}
\chi_{2} \psi_{3} = 0$.  Further suppose, for the sake of contradiction, that $\lambda_{1} \psi_{1}
+ \lambda_{2} \psi_{2} + \lambda_{3} \psi_{3} \neq 0$. We may then rearrange the equation as
follows:
\[ - \chi_{1} \chi_{2}^{-1} = (\lambda_{1} \psi_{1} + \lambda_{2} \psi_{2} + \lambda_{3} \psi_{3})^{-1}(\lambda_{4} \psi_{1} + \lambda_{5} \psi_{2} + \lambda_{6} \psi_{3}).\]

Noting that the left hand side of the equation is an element of $\F_{q^2}$, and the right
hand side of the equation is an element of $\F_{q^3}$, we conclude that both sides of the
equation are elements of $\F_{q^2} \cap \F_{q^3} = \F_q$. Then we must have $\chi_{1} = \lambda \chi_{2}$, for some $\lambda \in \F_q$ -- a contradiction.

Hence, we must have $\lambda_{1} \psi_{1} + \lambda_{2} \psi_{2} + \lambda_{3} \psi_{3} = 0$ and
consequently, $\lambda_{4} \psi_{1} + \lambda_{5} \psi_{2} + \lambda_{6} \psi_{3} = 0$. As
$\psi_{1}$, $\psi_{2}$ and $\psi_{3}$ are linearly independent, we see that $\lambda_{1} =
\lambda_{2} = \cdots = \lambda_{6} = 0$. This confirms that $\{\chi_{1} \psi_{1}, \chi_{1} \psi_{2},
\chi_{1} \psi_{3} , \chi_{2} \psi_{1}, \chi_{2} \psi_{2}, \chi_{2} \psi_{3}\}$ is a basis for
$\F_{q^6}$.
\end{proof} 

It was discovered that, for certain values of $q$, the image of $\sigma$ is a transversal of the orbits discussed above.

\begin{proposition} 
For $q \neq 2 \bmod 3$, every equivalence class of $(\ref{eqn:equiv_rel})$ contains exactly one
element of the Segre variety arising from the products $ \mathcal{S} = \{x y : x \in
\F_{q^2}^\times, y \in \F_{q^3}^\times\}.$
\end{proposition}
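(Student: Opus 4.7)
The plan is to work entirely inside the cyclic group $\F_{q^6}^\times$, reducing everything to subgroup orders and greatest common divisors.

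First I would observe that $\mathcal{S} = \F_{q^2}^\times \cdot \F_{q^3}^\times$ is itself a subgroup of $\F_{q^6}^\times$ (the factors are subgroups and the ambient group is abelian). Since $\F_{q^2}^\times \cap \F_{q^3}^\times = \F_q^\times$, its order is $(q^2-1)(q^3-1)/(q-1) = (q-1)(q+1)(q^2+q+1)$. Similarly, the equivalence classes of $(\ref{eqn:equiv_rel})$ are precisely the cosets of the subgroup $K := \{z \in \F_{q^6}^\times : z^{(q-1)(q^2-q+1)} = 1\}$, which has order $(q-1)(q^2-q+1)$, giving $(q+1)(q^2+q+1)$ classes in total.

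The statement will follow from the two equalities $\mathcal{S} \cdot K = \F_{q^6}^\times$ and $\mathcal{S} \cap K = \F_q^\times$. The first shows every coset $gK$ meets $\mathcal{S}$; a standard coset argument then gives $|gK \cap \mathcal{S}| = |\mathcal{S} \cap K| = q-1$, and the second equality shows these $q-1$ elements are $\F_q^\times$-multiples of one another, and hence represent a single projective point on the Segre variety. Using the abelian product formula $|\mathcal{S} \cdot K| = |\mathcal{S}||K|/|\mathcal{S} \cap K|$, together with the cyclicity of $\F_{q^6}^\times$ (whence $|\mathcal{S} \cap K| = \gcd(|\mathcal{S}|,|K|)$), both equalities reduce to a single gcd computation.

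The main obstacle is that gcd computation, which is precisely where the hypothesis $q \not\equiv 2 \pmod{3}$ forces itself into the argument. After pulling out the factor $(q-1)$ one must compute $\gcd((q+1)(q^2+q+1),\, q^2-q+1)$. The identity $q^2-q+1 = (q+1)(q-2) + 3$ gives $\gcd(q+1, q^2-q+1) = \gcd(q+1, 3)$; the identity $(q^2+q+1)-(q^2-q+1) = 2q$ together with $q^2-q+1$ being odd (it equals $q(q-1)+1$) and coprime to $q$ gives $\gcd(q^2+q+1, q^2-q+1) = 1$; and $\gcd(q+1, q^2+q+1) = 1$ since $q^2+q+1 = (q+1)q + 1$. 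Combining these, the gcd is $(q-1)\gcd(q+1, 3)$, which equals $q-1$ exactly when $q \not\equiv 2 \pmod{3}$. Under this hypothesis, $\mathcal{S} \cap K$ must be the unique subgroup of $\F_{q^6}^\times$ of order $q-1$, namely $\F_q^\times$, completing the reduction.
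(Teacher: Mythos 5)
Your proof is correct and follows essentially the same route as the paper: both arguments hinge on the subgroup $K$ (the paper's $\mathcal{W}$) of order $(q-1)(q^2-q+1)$ and the identical gcd computation showing $\mathcal{S}\cap K=\F_q^\times$ precisely when $q\not\equiv 2\pmod 3$. The only difference is cosmetic --- you finish with the product formula $|\mathcal{S}K|=|\mathcal{S}||K|/|\mathcal{S}\cap K|$ and a coset-intersection count, whereas the paper shows directly that equivalent Segre elements are projectively equal and then invokes the equality of the number of equivalence classes and Segre points.
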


\begin{proof} 
Consider the following subgroup of $\F_{q^6}^\times$ of order $(q^2-q+1)(q-1)$:
\[\mathcal{W} := \{ x \in \F_{q^6}^\times : x^{(q^2-q+1)(q-1)} = 1 \}.\] 
We will first show that the subgroup $\mathcal{S} \cap \mathcal{W}$ is contained in $\F_q^\times$.

The order of the intersection $\mathcal{S} \cap \mathcal{W}$ must divide the order of both subgroups, and hence must also divide 
\[\gcd((q-1)(q+1)(q^2+q+1), (q^2-q+1)(q-1)) = (q-1) \gcd((q+1)(q^2+q+1), q^2-q+1). \] 
Noting that $q^2 + q + 1$ and $q^2 - q + 1$ are coprime, we may simplify the expression to $(q-1)
\gcd(q+1, q^2-q+1).$ This can be further simplified to $(q-1) \gcd(q + 1, 3)$, with one step of the
Euclidean algorithm.  Clearly, when $q \neq 2 \bmod 3$, the above expression evaluates to $q -
1$. In these cases, the order of $\mathcal{S} \cap \mathcal{W}$ divides the order of
$\F_q^\times$, and hence $\mathcal{S} \cap \mathcal{W}$ is a subgroup of
$\F_q^\times$.

Now we may proceed to proving the stated result.  Suppose $x, x' \in \F_{q^2}^\times$ and
$y, y' \in \F_{q^3}^\times$. We show that if $xy$ and $x'y'$ are in the same equivalence
class of $(1)$, they represent the same element of the Segre variety. To do so, we suppose that
$(xy)^{(q^2 - q + 1)(q - 1)} = (x'y')^{(q^2 - q + 1)(q - 1)}$.  This equation can be rearranged into
the form $[(x'x^{-1})(y'y^{-1})]^{(q^2 - q + 1)(q - 1)} = 1$.

So $(x'x^{-1})(y'y^{-1})$ is an element of $\mathcal{S} \cap \mathcal{W}$, and is therefore in
$\F_q^\times$, using the fact shown above.  Hence, we have $(x'x^{-1})(y'y^{-1}) = \lambda$,
for some $\lambda \in \F_q^\times$, which may be rearranged into the form $x'y' = \lambda x
y$. Therefore, $xy$ and $x'y'$ are projectively equivalent. We have shown, then, that the Segre variety arising from $\mathcal{S}$ is a transversal of the given
equivalence classes, as there are an equal number of equivalence classes and elements of
$\mathcal{S}$.
\end{proof}

This relationship between the Segre variety and the orbits of a collineation group allows for the
expression of conforming two-intersection sets as geometric entities in the Segre variety. Every
orbit included in such a two-intersection set corresponds to exactly one point in the Segre
variety. 

\begin{example}
Fix a primitive root $\omega$ of $\F_{3^6}$. Then we can describe the eight orbits
comprising the Hill $56$-cap by single elements of the Segre variety:
\[
1, \omega^{91}, \omega^{14}, \omega^{42}, \omega^{105}, \omega^{126}, \omega^{133}, \omega^{217}.
\]
These elements of the Segre variety were obtained by taking the products $\omega^a\omega^b$, where 
\[
(a,b)\in \{ (0, 0), (0, 91), (560, 182), (588, 182), (560, 273), (672, 182), (588, 273), (672,
273)\}.\] Note that the first coordinates of these pairs are each divisible by $28$, and the second
coordinates are each divisible by $91$ (and so $\omega^a\in \F_{q^3}$ and $\omega^b\in
\F_{q^2}$ for each $(a,b)$).  We will catalogue each of our examples in Section
\ref{results} by points of the Segre variety in this way.
\end{example}

\section{Necessary conditions for the existence of two-intersection sets}

The following is an extension of the results presented by Penttila and Royle
\cite[\S2]{RoyleParameters}, and was used to quickly identify implausible parameter sets in the
computational search.

\begin{proposition} 
Suppose $\mathcal{K}$ is a two-intersection set with parameters $(n, k, h_{1}, h_{2})$. Then there
must exist an integer solution to:
\begin{equation} n^2 \frac {q ^ {k - 2} - 1} {q - 1} + n ( (1 - h_{1} - h_{2}) \frac {q^{k - 1} - 1} {q - 1} - \frac {q^{k - 2} - 1} {q - 1} ) + h_{1} h_{2} \frac {q^k - 1} {q - 1} = 0. \end{equation}
 \end{proposition}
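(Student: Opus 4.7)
The plan is a standard three-equation double-counting argument, combined with the fact that the polynomial $(X - h_1)(X - h_2)$ vanishes on every hyperplane intersection size.

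I would begin by letting $H_i$ denote the number of hyperplanes $\Pi$ of $\PG(k-1, q)$ with $|\mathcal{K} \cap \Pi| = h_i$, for $i \in \{1, 2\}$, and recording the standard incidence counts in $\PG(k-1, q)$: there are $(q^k - 1)/(q - 1)$ hyperplanes in total, each point lies on $(q^{k-1} - 1)/(q - 1)$ of them, and each line lies in $(q^{k-2} - 1)/(q - 1)$ of them. Counting (i) all hyperplanes, (ii) incident pairs $(p, \Pi)$ with $p \in \mathcal{K} \cap \Pi$, and (iii) ordered triples $(p, p', \Pi)$ with $p \ne p'$ and both in $\mathcal{K} \cap \Pi$, I obtain three linear equations in $H_1$ and $H_2$:
\[ H_1 + H_2 = \frac{q^k - 1}{q - 1}, \quad h_1 H_1 + h_2 H_2 = n \cdot \frac{q^{k-1} - 1}{q - 1}, \quad h_1(h_1 - 1) H_1 + h_2(h_2 - 1) H_2 = n(n-1) \cdot \frac{q^{k-2} - 1}{q - 1}. \]

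To finish, I would invoke the trivial identity $\sum_{i=1}^{2} (h_i - h_1)(h_i - h_2)\, H_i = 0$. Expanding it as $\sum h_i^2 H_i - (h_1 + h_2) \sum h_i H_i + h_1 h_2 \sum H_i = 0$ and rewriting $\sum h_i^2 H_i = \sum h_i(h_i - 1) H_i + \sum h_i H_i$, the three equations above substitute directly to produce the stated quadratic relation in $n$. Integrality is immediate: $n$ itself is a positive integer satisfying the relation by hypothesis.

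The calculations are routine linear algebra; there is no real obstacle. The only points requiring a moment's care are the standard counts for hyperplanes through a point and through a line, and the bookkeeping during the final substitution.
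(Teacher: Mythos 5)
Your proposal is correct and follows essentially the same route as the paper: the same three counting equations (hyperplanes, incident point--hyperplane pairs, and ordered pairs of distinct points of $\mathcal{K}$ in a common hyperplane), combined with the linear combination having coefficients $h_1h_2$, $1-h_1-h_2$, $1$ — which your identity $\sum_i (h_i-h_1)(h_i-h_2)H_i=0$ merely repackages and motivates. (Incidentally, your second equation is stated correctly with the divisor $q-1$; the paper's version has a typographical slip there.)
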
 

\begin{proof} 
If $t_{1}$ and $t_{2}$ are the number of hyperplanes that are incident with $h_{1}$ and $h_{2}$
points in $\mathcal{K}$, respectively, then counting arguments reveal the following equations:
\begin{center}
\begin{tabular}{l|p{7cm}}
\hline\\ 
$ t_{1} + t_{2} = (q^{k} - 1)/(q - 1)$ & counting the number of hyperplanes\\[15pt] \hline\\
$h_{1} t_{1} + h_{2} t_{2} = n (q^{k-1} - 1)(q -1)$& counting incident point-hyperplane pairs \\[15pt] \hline\\
$h_{1} (h_{1} - 1) t_{1} + h_{2} (h_{2} - 1) t_{2} = n (n - 1) (q ^ {k - 2} - 1)/(q - 1)$ &\begin{minipage}{7cm}counting triples
  $(p_1,p_2,\Pi)$ where $p_1$ and $p_2$ are two different points incident with the hyperplane $\Pi$ \end{minipage}\\[20pt] \hline
\end{tabular}
\end{center}
If we take the left-hand sides of these three equations, and sum them with coefficients $h_1h_2$,
$1-h_1-h_2$, and $1$ accordingly, we will obtain $0$. The result then follows.
\end{proof} 

\begin{proposition}
Suppose $\mathcal{K}$ is a two-intersection set with parameters $(n, k, h_{1}, h_{2})$. Then $h_{2}
- h_{1}$ must divide $q^{k - 2}$.
\end{proposition}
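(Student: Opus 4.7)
The plan is to perform the double count of the previous proposition locally at a single point $p \in \PG(k-1,q)$, then subtract the resulting identities for $p \in \mathcal{K}$ and $p \notin \mathcal{K}$ to isolate $q^{k-2}$. For a fixed $p$, let $a(p)$ denote the number of hyperplanes through $p$ meeting $\mathcal{K}$ in exactly $h_1$ points, and $b(p)$ the number meeting $\mathcal{K}$ in exactly $h_2$ points. Two counts are then immediate: $a(p)+b(p)$ equals the total number $(q^{k-1}-1)/(q-1)$ of hyperplanes through $p$, while $h_1 a(p) + h_2 b(p)$ counts the incident pairs $(r,\Pi)$ with $r \in \mathcal{K}\cap\Pi$ and $p \in \Pi$.

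To evaluate the second count I would use the standard facts that a single point of $\PG(k-1,q)$ lies on $(q^{k-1}-1)/(q-1)$ hyperplanes, whereas any two distinct points lie on $(q^{k-2}-1)/(q-1)$ common hyperplanes. Splitting off the contribution of $r=p$, which is present only when $p \in \mathcal{K}$, and eliminating $b(p)$ from the two linear equations yields
\begin{align*}
(h_2 - h_1)\, a(p) &= (h_2 - 1)\frac{q^{k-1}-1}{q-1} - (n - 1)\frac{q^{k-2}-1}{q-1}, \quad p \in \mathcal{K},\\
(h_2 - h_1)\, a(p) &= h_2\,\frac{q^{k-1}-1}{q-1} - n\,\frac{q^{k-2}-1}{q-1}, \quad p \notin \mathcal{K}.
\end{align*}

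Subtracting the first identity from the second cancels both the $n$- and $h_2$-dependence and collapses to
\[ (h_2 - h_1)\bigl(a_{\mathrm{out}} - a_{\mathrm{in}}\bigr) \;=\; \frac{q^{k-1}-1}{q-1} - \frac{q^{k-2}-1}{q-1} \;=\; q^{k-2}, \]
where $a_{\mathrm{in}}, a_{\mathrm{out}}$ denote the values of $a(p)$ for $p$ inside, respectively outside, $\mathcal{K}$ (these are independent of the chosen $p$ since the two linear equations determine them uniquely). As the left-hand difference is an integer, $h_2-h_1$ divides $q^{k-2}$, as required. The only caveat is that both kinds of point must actually occur; this fails only when $\mathcal{K}$ is empty or the whole of $\PG(k-1,q)$, in which case $h_1=h_2$ and the statement is vacuous. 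I foresee no real obstacle, as the whole argument is a one-line double count; the only conceptual choice is to work point-locally, so that subtracting the two identities strips away every parameter except $q$ and $k$.
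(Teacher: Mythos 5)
Your argument is correct and is essentially the paper's own proof: the same four point-local counts (the number of hyperplanes through a point, and the incident pairs $(r,\Pi)$ with $r\in\mathcal{K}\cap\Pi$ counted two ways, once for a point of $\mathcal{K}$ and once for a point outside it), combined so that $(h_2-h_1)$ times an integer equals $q^{k-2}$. The only cosmetic difference is that you solve for the $h_1$-hyperplane counts where the paper solves for the $h_2$-hyperplane counts $\rho_2-\sigma_2$, and you add the (harmless, correct) remark that points of both kinds must exist.
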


\begin{proof}
Suppose that $\rho_{1}$ and $\rho_{2}$ are the number of hyperplanes through a point $P \in
\mathcal{K}$ that are incident with $h_{1}$ and $h_{2}$ points in $\mathcal{K}$,
respectively. Similarly, suppose $\sigma_{1}$ and $\sigma_{2}$ are the number of hyperplanes through
a point $Q \notin \mathcal{K}$ that are incident with $h_{1}$ and $h_{2}$ points in $\mathcal{K}$,
respectively. Then counting arguments show that:
\begin{equation} \rho_{1} + \rho_{2} = \frac{q^{k - 1} - 1}{q - 1}, \end{equation}
\begin{equation} \sigma_{1} + \sigma_{2} = \frac{q^{k - 1} - 1}{q - 1}, \end{equation}
\begin{equation} (h_{1} - 1)\rho_{1} + (h_{2} - 1)\rho_{2} = (n - 1)\frac{q^{k - 2} - 1}{q - 1}, \text{ and} \end{equation}
\begin{equation} h_{1}\sigma_{1} + h_{2}\sigma_{2} = n\frac{q^{k - 2} - 1}{q - 1}. \end{equation}
By solving this system of equations, we see that $\rho_{2} - \sigma_{2} = \frac{q^{k - 2}}{h_{2} - h_{1}}$ must be an integer. 
\end{proof} 

\section{A computational search for two-intersection sets}

A significant advantage of searching for two-intersection sets, rather than the other mathematical
objects discussed in this paper, is the relative ease with which two-intersection sets can be found
computationally. The task of finding two-intersection sets in a projective space can be naturally
represented as a linear programming problem, as evinced by Kohnert \cite{KohnertCodes}. This section
details a method of framing the task in such a way.

Let $G$ be a group of collineations of $\PG(5,q)$.
We let $M$ be the $G$-quotient of the point-hyperplane incidence matrix wherein the $(i, j)^{th}$ entry is the number of
elements from the $i$th $G$-orbit on points incident with the $j$th $G$-orbit representative on hyperplanes.
Alternatively, we can use a duality arising from a bilinear form, say, so that the hyperplanes are replaced by points
and incidence is replaced by orthogonality. In our application, where $G$ is a cyclic group of order $q^2-q+1$ acting
on $\PG(5,q)$, we can simply construct $M$ by indexing its rows and columns by the cubic Segre variety $\mathcal{S}_{1,2}$.
The relative trace map (from $\F_{q^6}$ to $\F_q$) yields a bilinear form when we model $\PG(5,q)$ by the field $\F_{q^6}$:
\[
B(x,y):=\mathrm{Tr}_{q^6\to q}(xy).
\]
Hence, we may instead calculate $M(i,j)$ by calculating the number of elements $g\in G$ such that $B(s_i g,s_j)=0$,
where $s_i,s_j$ are the $i$-th and $j$-th elements of the Segre variety modelled in $\F_{q^6}$.

An \emph{orbit inclusion vector} is a 0-1 vector in which the $i$th entry is $1$ or $0$, dependent on
whether the $i$-th orbit is included or excluded from the current point set (resp.). Clearly, premultiplying
the inclusion vector with the incidence matrix yields a vector wherein the $i$-th entry
contains the number of points in the current point set incident with the $i$-th hyperplane.

In order to find a two-intersection set with intersection values $h_{1}$ and $h_{2}$, we need
to find solutions to the system of equations:
\[ \textbf{x} M = ( h_{1} \text{ or } h_{2},  \quad h_{1} \text{ or } h_{2},\quad \ldots ,\quad h_{1} \text{ or } h_{2} ), \]
where $\textbf{x}$ is a 0-1 vector. While this is not a linear system, it can be converted into one
with a minor modification. For a given solution $\textbf{x}$, we introduce a vector $\textbf{y}$ of
length $n$, such that the $i$th entry of $\textbf{y}$ is $0$ if the $i$th hyperplane is incident
with $h_{1}$ points, or $1$ if the $i$th hyperplane is incident with $h_{2}$ points. This allows the
problem to be framed as a linear system of equations:
\[ \textbf{x} M + (h_{1} - h_{2})  \textbf{y} = \left( h_{1}, h_{1}, \ldots , h_{1}  \right), \]
which can then be solved with software such as \textit{Gurobi} \cite{gurobi}. This also has the benefit of immediately revealing 
which hyperplanes are incident with $h_{1}$ points and which are incident with $h_{2}$.

\section{Optimising the search}

An initial search for
two-intersection sets, filtered only by fulfilment of the necessary conditions discussed above, was
able to return hundreds of thousands of results without exhausting the problem space.
To prevent the re-computation of redundant data, the action of the full collineation group was used to impose a number of
constraints on the search. We computed valuable information regarding the symmetries of the projective space
using the free software package \textsc{GAP} \cite{GAP4}. Firstly, it was noted that the normaliser of our element $\tau$ of order $7$
in the full collineation group acted transitively on the point-orbits of $\tau$, meaning any solution
was equivalent to one that included the first orbit. Hence, the first orbit was selected in every
one of the reported solutions.

Next, the subgroup that stabilised the first orbit was examined. This group had nine orbits itself,
represented by the orbits with indices in the set $\{1, 2, 3, 5, 6, 17, 18, 20, 24\}$. Hence, any
solution that included two or more orbits was equivalent to one that included either orbits 1 and 2,
or orbits 1 and 3, or orbits 1 and 5 etc. The solution space was then restricted to ensure that one
of these orbit pairs was always included.

\begin{figure}
  \begin{center}
    \includegraphics[width=0.98\textwidth]{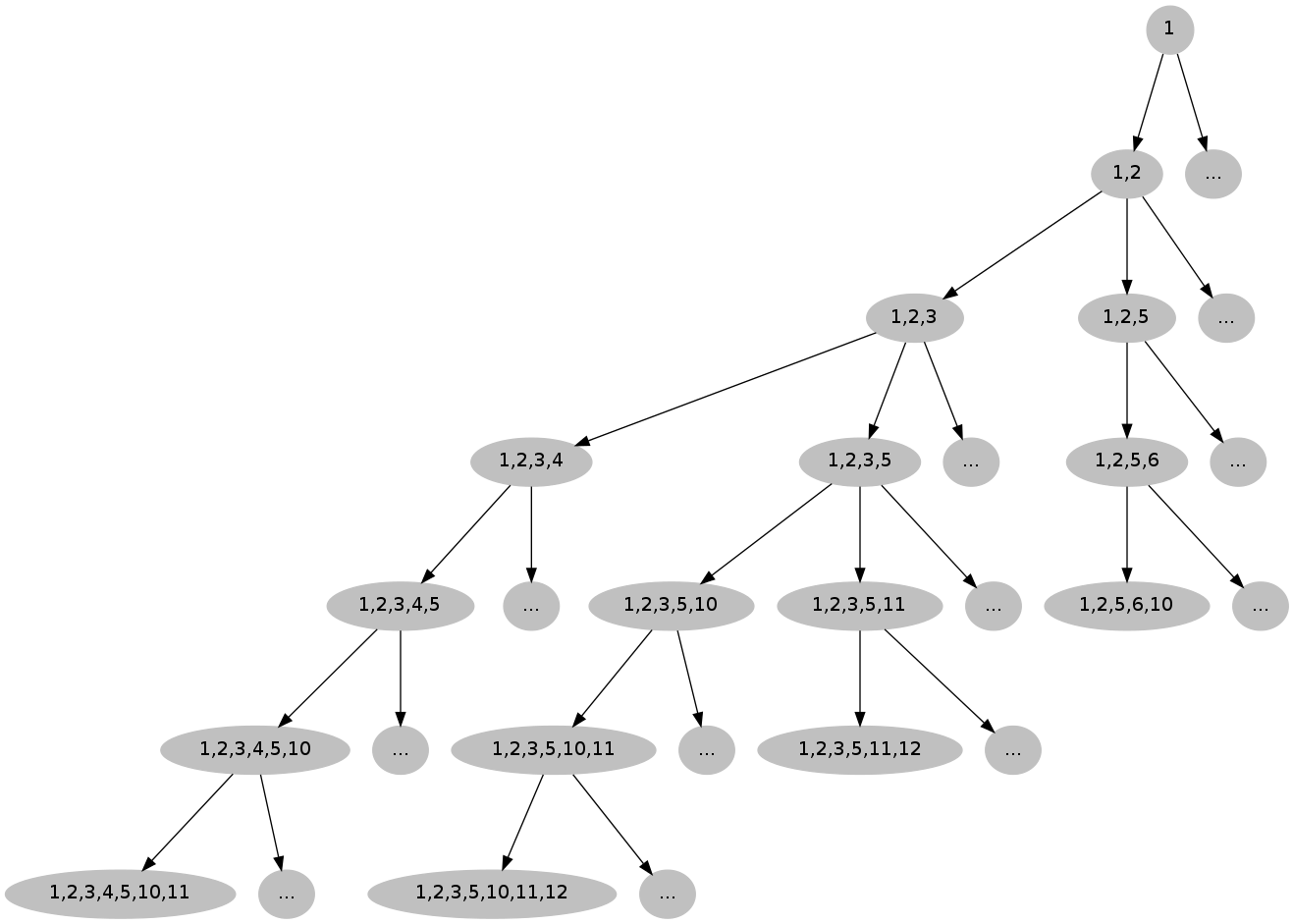}
    \caption{The tree of orbits used to restrict the computational search space. Each node labeled
      with ellipses denotes at least six siblings not shown for the sake of brevity. }
  \end{center}
\end{figure}

This method was reapplied to those subproblems that proved computationally difficult. For instance,
including orbits 1 and 2 in a solution was shown to be equivalent to including one of nine triples
of orbits. In this manner, a tree of computations was formed and traversed, sometimes reaching seven
levels deep.

One optimisation that proved particularly efficient was facilitated by the use of the branching
method discussed above. As noted, there are many sets of orbits that are equivalent to one
another. The search was accelerated by precluding the consideration of any set equivalent to one
already encountered. For example, there exist 52 pairs of orbits whose inclusion is equivalent to
the inclusion of the orbits 1 and 2. Hence, after all solutions containing orbits 1 and 2 have been
found, any of these 52 equivalent pairs of orbits can be excluded from appearing again in any future
results.

These optimisation techniques allowed for the enumeration of all $\tau$-invariant two-intersection sets in $\PG(5,
3)$ within a viable time-frame (approximately 2 weeks).

In order to have confidence in the correctness of the search, we used the constraint satisfaction solver \textsc{Minion} \cite{minion} to
simulate some of our results. In particular, we enumerated all the two-intersection sets with parameters
$\{11,20\}$, $\{21,30\}$, $\{26,35\}$, $\{28,37\}$, or $\{31,40\}$, invariant under a group with order divisible by $7$, and we obtained the
same results which we outline in Table \ref{smallparams}:
\begin{table}[H]
\begin{tabular}{c|p{5cm}|p{4cm}}
\hline
Parameters  & \begin{minipage}{5cm}Number of two-intersection\\ sets up to equivalence\end{minipage} & \begin{minipage}{4cm}Stabiliser sizes\\ (up to multiplicity)\end{minipage}\\
\hline
$\{11,20\}$ & 1 &  $40320$\\
$\{21,30\}$ & 6 & $7^4, 42^2$\\
$\{26,35\}$ & 4 & $7^2, 21^2$\\
$\{28,37\}$ & 22 & $7^9, 14^3, 21^2, 42^6, 168^1, 546^1$\\
$\{31,40\}$ & 55 & $7^{46}, 14^8, 26127360^1$\\
\hline
\end{tabular}
\caption{Examples for small parameter sets.}\label{smallparams}
\end{table}

More evidence for the efficacy of our computation is the third
author's\footnote{Sylvia Morris, `Symplectic translation planes, pseudo-ovals, and maximal $4$-arcs', dissertation for the Master of Philosophy (Research) of The
University of Western Australia (2014).} enumeration of the spreads of the symplectic polar space $\mathsf{W}(5,q)$, for small $q$.
In particular, the results were previously known for $q\le 4$, which verified the computational data obtained.

\section{Results}\label{results}

The following results were found using the methods discussed in the previous two sections. The
``Segre variety representation'' column lists the points in the Segre variety that correspond to the
given two-intersection set, in the following format: each entry specifies a pair of exponents to the
primitive root $\omega$ of $\F_{3^6}$, which is represented as $\F_3[x]/{\langle x^6 - x^4 +
  x^2 - x - 1\rangle}$. The specific element of the Segre variety can be obtained by applying the
multiplication map to the resulting elements. That is, the map $(a, b) \mapsto \omega^a \omega^b$ yields
the correct element of the Segre variety in the model $\F_{3^6}^\times$ over $\F_{3}^\times$. The
``stabiliser size'' column lists the size of the largest group that stabilises the given orbit
set. It is important to note that not all
two-intersection sets, and hence not all strongly regular graphs, are present in the tables that
follow: 6635 solutions were found altogether, with all but the 187 listed below having a full stabiliser of order $7$ in the
collineation group. Every one of the 6635 computed two-intersection sets was found to
correspond to a unique strongly regular graph. This was achieved by using the software \texttt{nauty} 
\cite{nauty} to identify the canonical representation of the graph. The full list of two-intersection sets
can be found at the webpage \url{https://researchdataonline.research.uwa.edu.au/handle/123456789/1358}.

{\footnotesize
% [inline block 0: 10 envs, 62102 chars -> data_tex | \begin{longtable}{|p{0.85\textwidth}|p{0.1\textwidth}|} \caption{Two-intersection sets of type $(56, 6, 11, 20)$} \\ ...]
}
The above examples give rise to strongly regular graphs with parameters $(729, 364, 181, 182)$.
The example with stabiliser of order $20160$ arises by taking the derived subgroup of the stabiliser of the Hill-cap. This group has orbit lengths $56$, $56$, $126$, $126$ on the points of $\PG(5,3)$.
Taking a union of an orbit of size $56$ with one of size $126$ yields a two-intersection set with parameters $(182, 6, 56, 65)$.
The five examples in the table above that have an asterisk in the last column arise from partial spreads of $\PG(5,3)$; the `SU2' construction in \cite[\S7]{CalderbankSurvey}. Two of these examples arise from the `CY2' construction in \cite[\S9]{CalderbankSurvey}, denoted by two asterisks.

\section*{Acknowledgements}

First the second author would like to acknowledge Pablo Spiga for invaluable discussions on
research related to this work. In particular, there are some amorphic $3$-class association schemes
related to some of the codes found in our search.
We would like to thank Gordon Royle and Michael Giudici for their invaluable time and for the use of their computational facilities. 
Finally, we also acknowledge Andrew Bassom for supporting the first author in a Summer Vacation Scholarship.

%\bibliographystyle{plain}
%\bibliography{refs}

\end{document}